%%%%%%%%%%%%%%%%%%%%%%%%%%%%%%%%%%%%%%%%%%%%%%%%%%%%%%%%%%%%%%
\documentclass[12pt,reqno]{amsart}

\setlength{\textheight}{23cm}
\setlength{\textwidth}{16cm}
\setlength{\topmargin}{-0.8cm}
\setlength{\parskip}{0.3\baselineskip}
\hoffset=-1.4cm

\usepackage{amssymb}

\newtheorem{theorem}{Theorem}[section]
\newtheorem{proposition}[theorem]{Proposition}
\newtheorem{lemma}[theorem]{Lemma}
\newtheorem{corollary}[theorem]{Corollary}

\theoremstyle{definition}

\numberwithin{equation}{section}

\begin{document}
\baselineskip=15.5pt

\title[Numerical flatness and principal bundles on Fujiki manifolds]{Numerical
flatness and principal bundles on Fujiki manifolds}

\author[I. Biswas]{Indranil Biswas}

\address{School of Mathematics, Tata Institute of Fundamental
Research, Homi Bhabha Road, Mumbai 400005, India}

\email{indranil@math.tifr.res.in}

\subjclass[2010]{14J60, 32L05, 32J25}

\keywords{Numerically flat bundle, Fujiki manifold, principal bundle, nefness}

\date{}

\begin{abstract}
Let $M$ be a compact connected Fujiki manifold, $G$ a semisimple affine algebraic group
over $\mathbb C$ with one simple factor and $P$ a fixed
proper parabolic subgroup of $G$. For a holomorphic principal $G$--bundle $E_G$ over $M$,
let ${\mathcal E}_P$ be the holomorphic principal $P$--bundle $E_G\, \longrightarrow\, E_G/P$ given by
the quotient map.
We prove that the following three statements are equivalent: (1) ${\rm ad}(E_G)$ is numerically flat,
(2) the holomorphic line bundle $\bigwedge^{\rm top} {\rm ad}({\mathcal E}_P)^*$ is nef, and (3)
for every reduced irreducible compact complex analytic space $Z$ with a K\"ahler form $\omega$,
holomorphic map $\gamma\, :\, Z\, \longrightarrow\, M$, and holomorphic reduction of structure group
$E_P\, \subset\, \gamma^*E_G$ to $P$, the inequality ${\rm degree}({\rm ad}(E_P))\, \leq\, 0$
holds.
\end{abstract}

\maketitle

\section{Introduction}

A basic theorem of Miyaoka says that a vector bundle $E$ on a smooth complex projective curve $C$ is semistable if 
and only if the relative anticanonical line bundle for the
natural projection ${\mathbb P}(E) \, \longrightarrow\, C$ is nef 
\cite{Mi}. A holomorphic vector bundle $E$ on $C$ is semistable if and only if the vector bundle ${\rm ad}(E)\, 
\longrightarrow\, C$ of trace zero endomorphisms is numerically flat. The very useful notion of numerically flat 
vector bundles was introduced by Demailly, Peternell and Schneider in \cite{DPS}; we recall that a holomorphic 
vector bundle $V$ is numerically flat if both $V$ and $V^*$ are nef. Therefore, a reformulation of Miyaoka's theorem 
says that ${\rm ad}(E)$ is numerically flat if and only if the relative anticanonical line bundle on ${\mathbb 
P}(E)$ is nef.

For $E$ as above, fix any $1\,\leq\, r\, <\, {\rm rank}(E)$. Let ${\rm Gr}(r, E)\, \longrightarrow\, C$ be be the 
Grassmann bundle that parametrizes the $r$-dimensional quotients of the fibers of $E$. Bruzzo and Hern\'andez 
Ruip\'erez proved the following big generalization of the above theorem of Miyaoka: The relative anticanonical line 
bundle on ${\rm Gr}(r, E)$ is nef if and only if ${\rm ad}(E)$ is numerically flat \cite{BH}.

Let $X$ be a smooth complex projective variety and $E_G$ a holomorphic principal $G$--bundle on $X$, where $G$ is a 
simple affine algebraic group without center over $\mathbb C$. Fix a parabolic subgroup $P\, \subsetneq\, G$. In 
\cite{BB}, the following generalization of the above result of Bruzzo and Hern\'andez Ruip\'erez was proved: The 
adjoint vector bundle ${\rm ad}(E_G)$ is numerically flat if and only if the relative anticanonical line bundle for 
the natural projection $E_G/P \, \longrightarrow\, X$ is nef.

Our aim here is to investigate the principal bundles on a compact Fujiki manifold from the above point of view. We 
recall that a Fujiki manifold is a compact complex manifold which is the image of a bimeromorphic surjective map 
from a compact K\"ahler manifold \cite{Fu1}, \cite{Fu2}, or equivalently, the image of a holomorphic surjective map 
from a compact K\"ahler manifold \cite{Va}.

Let $G$ be a semisimple affine algebraic group, over $\mathbb C$, with one simple factor and $P\, \subsetneq\, G$
a fixed parabolic subgroup. Let $M$ be a compact connected Fujiki manifold and $E_G$ a holomorphic principal
$G$--bundle on $M$. The quotient map $E_G\, \longrightarrow\, E_G/P$ defines a holomorphic principal $P$--bundle
on $E_G/P$. The top exterior product of the adjoint bundle for this principal $P$--bundle is the relative
canonical bundle for the natural projection $E_G/P\, \longrightarrow\, M$.

We prove the following (see Theorem \ref{thm1} and Theorem \ref{thm2}):

\begin{theorem}\label{thmi}
Let $E_G$ be a holomorphic principal $G$--bundle on a compact connected Fujiki manifold $M$.
Then the following three statements are equivalent:
\begin{enumerate}
\item The holomorphic vector bundle ${\rm ad}(E_G)$ is numerically flat.

\item The relative anticanonical bundle for the natural projection $E_G/P\, \longrightarrow\, M$ is nef.

\item For every quadruple of the form $(Z,\, \omega,\, \gamma, E_P)$, where $Z$ is a reduced irreducible compact
complex analytic space equipped with a K\"ahler form $\omega$,
$$\gamma\, :\, Z\, \longrightarrow\, M$$
is a holomorphic map, and $E_P\, \subset\, \gamma^*E_G$ is a holomorphic reduction of structure group
of the principal $G$--bundle $\gamma^*E_G$ to the subgroup $P$, the inequality
$${\rm degree}({\rm ad}(E_P))\, \leq\, 0$$
holds.
\end{enumerate}
\end{theorem}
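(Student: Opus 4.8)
The plan is to establish the cycle of implications (1) $\Rightarrow$ (3) $\Rightarrow$ (2) $\Rightarrow$ (1), reducing the Fujiki case to the Kähler case and then, on a Kähler manifold, to the known projective results by a dévissage through curves and sub-varieties. First I would fix a surjective holomorphic map $\phi\,:\,N\,\longrightarrow\,M$ from a compact Kähler manifold $N$ (which exists by Varouchas's characterisation \cite{Va}); since $M$ is covered by images of such $N$'s, numerical flatness, nefness, and the degree inequalities on $M$ can all be tested after pullback to $N$, because $\phi$ is surjective and numerical properties of bundles (nef, numerically flat) as well as degrees against Kähler classes pull back correctly under surjective holomorphic maps of compact complex spaces. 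This lets me assume from the outset that the base is Kähler.

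For (1) $\Rightarrow$ (3): given ${\rm ad}(E_G)$ numerically flat on $M$, pull back along $\gamma\,:\,Z\,\longrightarrow\,M$ to get that ${\rm ad}(\gamma^*E_G)$ is numerically flat on the reduced irreducible compact Kähler space $Z$ (numerical flatness is preserved by pullback). A reduction $E_P\,\subset\,\gamma^*E_G$ gives the exact sequence $0\,\longrightarrow\,{\rm ad}(E_P)\,\longrightarrow\,{\rm ad}(\gamma^*E_G)\,\longrightarrow\,(\gamma^*E_G)\times^P(\mathfrak g/\mathfrak p)\,\longrightarrow\,0$; since ${\rm ad}(\gamma^*E_G)$ is numerically flat it has zero degree against $\omega^{\dim Z-1}$, and any subsheaf of a numerically flat bundle on a compact Kähler space has non-positive degree (nefness of the dual bounds the degree of quotients, hence of subsheaves from the other side). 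Applying this to ${\rm ad}(E_P)\,\subset\,{\rm ad}(\gamma^*E_G)$ yields ${\rm degree}({\rm ad}(E_P))\,\leq\,0$. The implication (3) $\Rightarrow$ (2) I would prove by contraposition: if the relative anticanonical bundle $\xi$ of $p\,:\,E_G/P\,\longrightarrow\,M$ is not nef, then there is a reduced irreducible curve (or more generally an irreducible compact analytic subspace) $C$ in $E_G/P$, which we may smooth/normalise and equip with a Kähler form, on which $\xi$ has negative degree; setting $Z$ to be this curve with $\gamma = p|_C$ and $E_P$ the reduction of $\gamma^*E_G$ determined by the section $C\,\hookrightarrow\,(E_G/P)|_C \,=\,\gamma^*(E_G/P)$, one identifies $\deg({\rm ad}(E_P))$ with a positive multiple of $-\deg(\xi|_C)\,>\,0$, contradicting (3). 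Here the identification $\bigwedge^{\rm top}{\rm ad}(\mathcal E_P)^* \cong$ relative canonical bundle, stated in the introduction, is exactly what converts the anticanonical negativity into positivity of $\deg({\rm ad}(E_P))$.

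The implication (2) $\Rightarrow$ (1) is the substantive one, and I expect it to be the main obstacle. On the Kähler base $M$, nefness of the relative anticanonical bundle of $E_G/P\,\longrightarrow\,M$ should force semistability of ${\rm ad}(E_G)$ with respect to every Kähler class (using that instability would produce a maximal destabilising reduction $E_Q$ to some parabolic $Q$, a compatible reduction to $P$ after passing to a suitable flag space, and hence a subvariety of $E_G/P$ violating nefness of $\xi$, via the relation between $\deg({\rm ad}(E_P))$ and the anticanonical degree), and moreover semistability with trivial "discriminant-type" characteristic class data; combined, these give that ${\rm ad}(E_G)$ is numerically flat, by the structure theory of numerically flat bundles on compact Kähler manifolds à la Demailly–Peternell–Schneider (a semistable bundle of degree zero with vanishing second Chern character integrated against $\omega^{\dim M-2}$ is numerically flat). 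The delicate points are: transferring the single-simple-factor hypothesis correctly so that the adjoint bundle's (in)stability is governed by reductions to proper parabolics as in \cite{BB}; handling reductions and destabilising subsheaves on the possibly singular spaces $Z$ and subvarieties of $E_G/P$ (one normalises, pulls back Kähler forms, and uses that degree is computed on the smooth locus); and making the curve/subvariety extraction in (3) $\Rightarrow$ (2) precise when $\xi$ fails to be nef, which requires the characterisation of non-nef line bundles on compact Kähler spaces by the existence of a subvariety on which the restriction is not nef. Once the base is reduced to Kähler, each of these is an adaptation of the projective arguments of \cite{BB}, \cite{BH} together with the Bando–Siu / Uhlenbeck–Yau type regularity used in the Kähler numerically-flat theory.
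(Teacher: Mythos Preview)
Your implications $(1)\Rightarrow(3)$ and $(3)\Rightarrow(2)$ are essentially correct and close to the paper's argument. For $(1)\Rightarrow(3)$ you are even slightly more direct than the paper: since ${\rm ad}(E_P)\subset{\rm ad}(\gamma^*E_G)$ is a sub\emph{bundle} (the quotient is the associated bundle for $\mathfrak g/\mathfrak p$), nefness of ${\rm ad}(\gamma^*E_G)$ makes the quotient nef, hence of nonnegative degree, and $\deg{\rm ad}(\gamma^*E_G)=0$ finishes it. For $(3)\Rightarrow(2)$ your contrapositive is exactly the Demailly--Paun step the paper uses: one pulls back to a K\"ahler model $Y\to M$ so that $(f^*E_G)/P$ is K\"ahler, and then a subvariety $\mathcal S$ witnessing non-nefness of the relative anticanonical line bundle gives the offending quadruple $(Z,\omega,\gamma,E_P)$. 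So far so good.

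The genuine gap is in $(2)\Rightarrow(1)$. You propose to deduce from nefness of the relative anticanonical bundle both (a) semistability of ${\rm ad}(E_G)$ for every K\"ahler class, via a destabilising reduction $E_Q$ to some parabolic $Q$ and then ``a compatible reduction to $P$ after passing to a suitable flag space'', and (b) vanishing of the discriminant-type class $\int c_2({\rm ad}(E_G))\wedge\omega^{\dim M-2}$, so that the Demailly--Peternell--Schneider / Simpson criterion applies. Neither step is substantiated. For (a), a Harder--Narasimhan destabiliser produces a reduction to some $Q$, but $Q$ need not be conjugate to $P$; passing through a Borel or a flag variety does not obviously convert positivity of $\deg{\rm ad}(E_Q)$ into negativity of the anticanonical degree on a subvariety of $E_G/P$, and the single-simple-factor hypothesis alone does not bridge this. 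For (b) you give no mechanism at all to extract the second Chern class condition from nefness of a line bundle on $E_G/P$; on a K\"ahler (non-projective) base there is no Mehta--Ramanathan restriction to curves that would let you reduce to the one-dimensional case.

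The paper avoids both difficulties by a completely different device. Writing $\xi=\bigwedge^{\rm top}{\rm ad}(\mathcal E_P)^*$ for the relative anticanonical bundle, one observes that $\xi=\xi^{\otimes 2}\otimes K_\varphi$, so Mourougane's theorem on direct images of adjoint line bundles gives that $\varphi_*\xi^{\otimes 2}$ is nef. This bundle is associated to $E_G$ via the $G$--module $H^0(G/P,(K_{G/P}^{-1})^{\otimes 2})$; since $G$ is semisimple its determinant is trivial, so $\varphi_*\xi^{\otimes 2}$ is numerically flat, and hence so is ${\rm End}(\varphi_*\xi^{\otimes 2})$. Finally, because $G/Z_G$ is simple and acts faithfully on $H^0(G/P,(K_{G/P}^{-1})^{\otimes 2})$, the $G$--module $\mathfrak g$ is a direct summand of ${\rm End}\big(H^0(G/P,(K_{G/P}^{-1})^{\otimes 2})\big)$, so ${\rm ad}(E_G)$ is a direct summand of a numerically flat bundle and therefore numerically flat. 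This argument never touches semistability or $c_2$, and it is this Mourougane--plus--Tannakian step that your plan is missing.
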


Note that the first statement of Theorem \ref{thmi} does not involve the parabolic subgroup $P$. Therefore,
Theorem \ref{thmi} has the following corollary:

\begin{corollary}\label{cori}
Let $E_G$ be a holomorphic principal $G$--bundle on a compact connected Fujiki manifold $M$.
If the second and third statements in Theorem \ref{thmi} hold for one parabolic subgroup $P\, \subsetneq\, G$,
then they hold for every proper parabolic subgroup of $G$.
\end{corollary}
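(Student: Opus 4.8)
The plan is to deduce the corollary directly from Theorem \ref{thmi}, exploiting the fact that the first of the three equivalent statements there is intrinsic to $E_G$ and makes no reference to any parabolic subgroup. The key observation is that Theorem \ref{thmi} is established for an arbitrary — but fixed at the outset — proper parabolic subgroup $P\, \subsetneq\, G$; nothing in its formulation privileges a particular choice of $P$. So the first step is to invoke Theorem \ref{thmi} for the given parabolic $P_0\,\subsetneq\, G$ for which statements (2) and (3) are assumed to hold: the equivalence of the three statements then forces statement (1), namely that ${\rm ad}(E_G)$ is numerically flat.

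The second step is to feed statement (1) back into Theorem \ref{thmi}, this time with an arbitrary proper parabolic $P\,\subsetneq\, G$ in place of $P_0$. Reading off the implications (1)$\Rightarrow$(2) and (1)$\Rightarrow$(3) for this $P$, we obtain that the relative anticanonical bundle for the projection $E_G/P\, \longrightarrow\, M$ is nef, and that ${\rm degree}({\rm ad}(E_P))\, \leq\, 0$ for every quadruple $(Z,\, \omega,\, \gamma,\, E_P)$ of the type considered in statement (3). Since $P$ here is an arbitrary proper parabolic, this is exactly the assertion of Corollary \ref{cori}.

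The only point that genuinely needs to be checked — and it is the closest thing to an obstacle in an otherwise purely formal argument — is that the proof of Theorem \ref{thmi} is \emph{uniform} in the parabolic subgroup: it must not tacitly use that $P$ is, say, maximal or a Borel subgroup, nor any other special feature of a particular $P$. I would verify this by inspecting the ingredients entering that proof, principally: (i) the identification of $\bigwedge^{\rm top}{\rm ad}({\mathcal E}_P)^*$ with the relative anticanonical bundle of $E_G/P\, \longrightarrow\, M$, and (ii) the correspondence between holomorphic reductions of $\gamma^*E_G$ to $P$ and holomorphic sections of the associated fibre bundle $\gamma^*(E_G/P)\, \longrightarrow\, Z$. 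Both of these are valid for every proper parabolic subgroup $P\,\subsetneq\, G$, so no such dependence creeps in. Granting this, Corollary \ref{cori} is immediate and requires no further work.
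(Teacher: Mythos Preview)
Your argument is correct and is exactly the paper's own approach: the corollary is stated as an immediate consequence of Theorem \ref{thmi}, the sole observation being that statement (1) there does not involve the parabolic subgroup $P$. Your additional remark about the uniformity of the proof in $P$ is accurate but not needed, since Theorem \ref{thmi} is formulated for an arbitrary proper parabolic subgroup from the outset.
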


\section{Numerically flat bundles}

Let $M$ be a compact connected complex manifold. Fix a Hermitian structure $H_0$ on $M$. Let $\omega_{H_0}$
be the corresponding positive $(1,\,1)$-form on $M$. A holomorphic line bundle $L$ on $M$ is called
\textit{numerically effective} (\textit{nef} for short) if for every $\epsilon\, >\, 0$,
there is a Hermitian structure $h_\epsilon$ on $L$ such that
$$
\Theta_{h_\epsilon}(L) \, \geq\, - \epsilon\cdot \omega_{H_0}\, ,
$$
where $\Theta_{h_\epsilon}(L)$ is the curvature of the Hermitian complex connection on $L$ corresponding to
$h_\epsilon$ \cite[p.~299, Definition 1.2]{DPS}. We note that while the definition of nefness uses $H_0$,
the nefness of any given line bundle is actually independent of the choice of $H_0$. 

A holomorphic vector bundle $V$ on $M$ is called nef if the tautological line bundle ${\mathcal O}_{{\mathbb P}(V)}(1)$
on ${\mathbb P}(V)$ is nef \cite[p.~305, Definition 1.9]{DPS}. A holomorphic vector bundle $V$ on $M$ is
called \textit{numerically flat} if both $V$ and its dual $V^*$ are nef \cite[p.~311, Definition 1.17]{DPS}.

A compact connected complex manifold $M$ is called a Fujiki manifold if there exists a
surjective bimeromorphic map
$$
f\,:\, Y\,\longrightarrow\, M
$$
where $Y$ is a compact K\"ahler manifold \cite{Fu1}, \cite{Fu2}. A basic theorem of Varouchas says that $M$ is a
Fujiki manifold if it is the image of a compact K\"ahler manifold by a surjective morphism (it need not
be a bimeromorphism) \cite[p.~51, Theorem 5]{Va}.

\begin{proposition}\label{prop1}
Let $M$ be a compact connected Fujiki manifold, and let
$$
f\,:\, Y\,\longrightarrow\, M
$$
be a surjective bimeromorphic map, where $Y$ is a compact connected K\"ahler manifold. Then a holomorphic
vector bundle $E$ on $M$ is numerically flat if and only if the pulled back vector bundle $f^*E$ is
numerically flat.
\end{proposition}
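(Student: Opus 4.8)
The plan is to prove the two implications separately; the only one requiring work is that numerical flatness of $f^*E$ implies that of $E$. For the forward implication, suppose $E$ is numerically flat, so $E$ and $E^*$ are nef. If $L$ is a nef line bundle on $M$ and $\epsilon>0$, choose a Hermitian metric $h_\epsilon$ on $L$ with $\Theta_{h_\epsilon}(L)\ge-\epsilon\,\omega_{H_0}$; pulling back, $\Theta_{f^*h_\epsilon}(f^*L)=f^*\Theta_{h_\epsilon}(L)\ge-\epsilon\,f^*\omega_{H_0}\ge-\epsilon\,C\,\omega_Y$, where $\omega_Y$ is a Kähler form on $Y$ and $C$ a constant (the continuous nonnegative form $f^*\omega_{H_0}$ being dominated by $C\,\omega_Y$ on the compact manifold $Y$). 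Hence $f^*L$ is nef. Applying this to $\mathcal{O}_{\mathbb{P}_M(E)}(1)$ and using that the natural projection $\mathbb{P}_Y(f^*E)=\mathbb{P}_M(E)\times_M Y\longrightarrow\mathbb{P}_M(E)$ pulls $\mathcal{O}_{\mathbb{P}_M(E)}(1)$ back to $\mathcal{O}_{\mathbb{P}_Y(f^*E)}(1)$ shows $f^*E$ is nef; likewise $(f^*E)^*=f^*(E^*)$ is nef, so $f^*E$ is numerically flat. (Here only the holomorphicity of $f$ is used.)

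For the converse, suppose $f^*E$ is numerically flat. The projection $\mathbb{P}_Y(f^*E)\longrightarrow\mathbb{P}_M(E)$ is a surjective bimeromorphic morphism of compact complex manifolds whose source is Kähler (a projective bundle over the Kähler manifold $Y$ is Kähler), and it carries $\mathcal{O}_{\mathbb{P}_M(E)}(1)$ back to the nef line bundle $\mathcal{O}_{\mathbb{P}_Y(f^*E)}(1)$. So it is enough to prove the following statement for line bundles and apply it once to $\mathbb{P}_M(E)$ and once to $\mathbb{P}_M(E^*)$: \emph{if $\pi\colon\widehat W\longrightarrow W$ is a surjective bimeromorphic morphism of compact complex manifolds with $\widehat W$ Kähler and $\mathcal{L}$ is a holomorphic line bundle on $W$ with $\pi^*\mathcal{L}$ nef, then $\mathcal{L}$ is nef.} Note that $W=\mathbb{P}_M(E)$ is again a Fujiki manifold, being the image of the Kähler manifold $\widehat W$ under a surjective morphism, so the statement is internally consistent.

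To prove this line bundle statement, recall the classical facts that the exceptional locus $E_\pi\subset\widehat W$ of $\pi$ is a hypersurface, that its image $S:=\pi(E_\pi)\subset W$ has codimension at least $2$, and that $\pi$ restricts to a biholomorphism $\widehat W\setminus E_\pi\longrightarrow W\setminus S$. Fix a Hermitian form $\omega_W$ on $W$. The key step is to upgrade the nefness of $\pi^*\mathcal{L}$ --- which a priori is formulated with a Kähler form on $\widehat W$ --- to the statement that for every $\epsilon>0$ there is a smooth Hermitian metric on $\pi^*\mathcal{L}$ with curvature $\ge-\epsilon\,\pi^*\omega_W$; this requires modifying the nef metrics near $E_\pi$, where the semi-positive form $\pi^*\omega_W$ degenerates, and is the point at which the triviality of $\pi^*\mathcal{L}$ on the contracted fibres of $\pi$ enters. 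Transporting such a metric through the above biholomorphism gives a smooth metric on $\mathcal{L}|_{W\setminus S}$ with curvature $\ge-\epsilon\,\omega_W$ and with bounded local weight functions; since $S$ has codimension at least $2$, these weights extend across $S$ as bounded quasi-plurisubharmonic functions (Riemann-type extension), producing a possibly singular metric on $\mathcal{L}$ over $W$ with curvature current $\ge-\epsilon\,\omega_W$, which a standard regularization on the compact Hermitian manifold $(W,\omega_W)$ converts into a smooth metric with curvature $\ge-2\epsilon\,\omega_W$. As $\epsilon$ is arbitrary, $\mathcal{L}$ is nef, and the proposition follows.

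The main obstacle is precisely the metric-improvement step near $E_\pi$: one must genuinely exploit that $\pi^*\mathcal{L}$ is holomorphically trivial along each fibre of $\pi$ in order to control the curvature where $\pi^*\omega_W$ collapses, and then take care that the extension and regularization over the (non-Kähler) target do not destroy the bound. An alternative route I would keep in reserve is to invoke the Demailly--Peternell--Schneider structure theorem on the Kähler manifold $Y$ --- $f^*E$ carries a filtration by holomorphic subbundles with Hermitian flat successive quotients --- and to push this filtration down across the codimension-$\ge2$ locus over which $f$ fails to be an isomorphism, using that removing such a locus does not change the fundamental group (so the flat quotients extend to $M$) together with coherent extension of the subsheaves across it; there the obstacle is instead to verify that the extended subsheaves are again subbundles.
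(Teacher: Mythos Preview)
Your forward implication matches the paper's (which simply cites the functoriality of nefness under pullback from \cite{DPS}).

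For the converse, your primary route---reducing to a line-bundle descent statement and attacking it with singular metrics---is left genuinely incomplete: you yourself flag the ``metric-improvement step'' (producing smooth metrics on $\pi^*\mathcal L$ with curvature $\ge -\epsilon\,\pi^*\omega_W$, where $\pi^*\omega_W$ degenerates along the exceptional locus) as the main obstacle, and you do not carry it out. This step is not a formality: restricted to an exceptional fibre the requirement forces the curvature to be $\ge 0$, hence the metric to be fibrewise flat, and engineering this while keeping the global lower bound is exactly the hard gluing problem. The subsequent extension and regularization steps are more standard (boundedness gives zero Lelong numbers, so Demailly-type regularization applies even on Hermitian $W$), but without the first step there is no proof.

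The paper instead follows precisely your ``alternative route,'' and does so more cleanly than you anticipate. It applies the Demailly--Peternell--Schneider structure theorem on the K\"ahler manifold $Y$ to get the filtration $0=V_0\subset\cdots\subset V_\ell=f^*E$ with unitary-flat graded pieces, and then invokes a result of Biswas--G\'omez to produce a flat \emph{holomorphic} connection $\widetilde\nabla$ on all of $f^*E$ that preserves every $V_i$ and induces the unitary flat connection on each $V_i/V_{i-1}$. Since $f$ is bimeromorphic, $f_*\colon\pi_1(Y)\to\pi_1(M)$ is an isomorphism, so the entire filtered flat bundle $(f^*E,\{V_i\},\widetilde\nabla)$ descends to a filtered flat bundle $(E,\{E_i\},\nabla)$ on $M$ with unitary-flat quotients; numerical flatness of $E$ then follows from closure under extensions. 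The key point you were missing is that one can put a single flat connection on $f^*E$ compatible with the filtration---this lets the \emph{subbundles} themselves descend as flat subbundles via $\pi_1$, so your worry about coherent extension and local freeness of the $E_i$ never arises.
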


\begin{proof}
Given any nef vector bundle $F$ on $M$, the vector bundle $f^*F$ is nef \cite[p.~305, Proposition 1.10]{DPS}.
Applying this to both $E$ and $E^*$ we conclude that $f^*E$ is numerically flat if $E$ is numerically flat.

To prove the converse, assume that $f^*E$ is numerically flat. A structure theorem of \cite{DPS} says that
$f^*E$ admits a filtration of holomorphic subbundles
\begin{equation}\label{e1}
0\,=\, V_0\, \subset\, V_1\, \subset\, \cdots \, \subset\, V_{\ell-1}\, \subset\, V_\ell \,=\, f^*E
\end{equation}
such that the holomorphic vector bundle $V_i/V_{i-1}$ admits a unitary flat connection for all $1\, \leq\,
i\, \leq\, \ell$ (see \cite[p.~311, Theorem 1.18]{DPS}). This immediately implies the following:
\begin{enumerate}
\item the Chern class $c_i(f^*E)\, \in\, H^{2i}(Y,\, {\mathbb R})$ vanishes for every $i\, \geq\, 1$
(see \cite[p.~311, Corollary 1.19]{DPS}), and

\item the vector bundle $f^*E$ is pseudostable (see \cite[p.~23, Definition 2.1]{BG} for
pseudostable bundles; set the Higgs field $\theta$ in \cite[Definition 2.1]{BG} to be the zero section).
\end{enumerate}
Consequently, $f^*E$ admits a flat holomorphic connection $\widetilde{\nabla}$ such that
\begin{enumerate}
\item the connection $\widetilde{\nabla}$ preserves the subbundle $V_i$ in \eqref{e1} for every $1\, \leq\,
i\, \leq\, \ell$, and

\item the holomorphic connection on $V_i/V_{i-1}$ induced by $\widetilde{\nabla}$ is unitary flat for
every $1\, \leq\, i\, \leq\, \ell$.
\end{enumerate}
(See \cite[p.~20, Theorem 1.1]{BG}; set the Higgs field in \cite[Theorem 1.1]{BG} to be the zero section.)

For any $1\, \leq\, i\, \leq\, \ell$, let $\widetilde{\nabla}^i$ be the connection on $V_i$ induced by
$\widetilde{\nabla}$.

Since the map $f$ is a bimeromorphism, the induced homomorphism of fundamental groups
$$f_*\, :\, \pi_1(Y)\, \longrightarrow\,\pi_1(M)$$
is actually an isomorphism. Therefore, we conclude the following:
\begin{enumerate}
\item the flat connection $\widetilde{\nabla}$ on $f^*E$ is the pull-back of a flat connection $\nabla$ on $E$, and

\item the flat subbundle $(V_i,\, \widetilde{\nabla}^i)$ of $(f^*E,\, \widetilde{\nabla})$ descends to
a flat subbundle $(E_i,\, {\nabla}^i)$ of $(E,\, \nabla)$ for all $1\, \leq\, i\, \leq\, \ell$. In other
words, $(f^*E,\, \widetilde{\nabla})$ is the pullback of the flat subbundle $(E_i,\, {\nabla}^i)$
of $(E,\, \nabla)$.
\end{enumerate}

Furthermore, the connection on $E_i/E_{i-1}$ induced by $\nabla^i$ is unitary flat, because the connection
on $V_i/V_{i-1}$ induced by $\widetilde{\nabla}$ is unitary flat. The vector bundle $E_i/E_{i-1}$ is
numerically flat because it admits a unitary flat connection. Since the extension of a nef vector bundle
by a nef vector bundle is again nef \cite[p.~308, Propoisition 1.15(ii)]{DPS}, it follows immediately that
the extension of a numerically flat vector bundle by a numerically flat vector bundle is again numerically flat.
Consequently, the vector bundle $E$ is numerically flat.
\end{proof}

The following is a consequence of the proof of Proposition \ref{prop1}.

\begin{corollary}\label{cor1}
Let $E$ be a holomorphic vector bundle on a compact connected Fujiki manifold $M$. Then $E$ is numerically
flat if and only if there is a filtration of holomorphic subbundles of $E$
$$
0\,=\, E_0\, \subset\, E_1\, \subset\, \cdots \, \subset\, E_{\ell-1}\, \subset\, E_\ell \,=\, E
$$
and a flat holomorphic $\nabla$ on $E$ such that
\begin{enumerate}
\item $\nabla$ preserves $E_i$ for all $1\, \leq\, i\, \leq\, \ell$, and

\item the connection on $E_i/E_{i-1}$ induced by $\nabla$ is unitary for all $1\, \leq\, i\, \leq\, \ell$.
\end{enumerate}
\end{corollary}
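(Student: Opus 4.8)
The plan is to extract both implications directly from the argument already carried out for Proposition \ref{prop1}, so that essentially no new input is required; this is why the statement is phrased as a corollary of that proof rather than of the proposition itself.

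For the forward implication, assume $E$ is numerically flat, and fix a surjective bimeromorphic map $f\,:\,Y\,\longrightarrow\, M$ with $Y$ a compact connected K\"ahler manifold, as provided by the definition of a Fujiki manifold. The proof of Proposition \ref{prop1} produces exactly the data we need: from the numerical flatness of $f^*E$ one obtains, via the structure theorem \cite[p.~311, Theorem~1.18]{DPS} together with \cite[p.~20, Theorem~1.1]{BG}, a flat holomorphic connection $\widetilde{\nabla}$ on $f^*E$ and a filtration $0\,=\,V_0\,\subset\, \cdots\, \subset\, V_\ell\,=\, f^*E$ by $\widetilde{\nabla}$-invariant holomorphic subbundles with the induced connection on each $V_i/V_{i-1}$ unitary flat; and since $f_*\,:\,\pi_1(Y)\,\longrightarrow\,\pi_1(M)$ is an isomorphism, $\widetilde{\nabla}$ is the pullback of a flat holomorphic connection $\nabla$ on $E$ and each $V_i$ is the pullback of a $\nabla$-invariant holomorphic subbundle $E_i\,\subset\, E$. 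It then remains only to record that the connection induced by $\nabla$ on $E_i/E_{i-1}$ is unitary: its pullback to $Y$ is the unitary flat connection on $V_i/V_{i-1}$, and a unitary flat connection is one whose monodromy representation is conjugate into a unitary group, a condition that transports across the isomorphism $f_*$.

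For the reverse implication, assume we are given a filtration $E_\bullet$ and a flat holomorphic connection $\nabla$ as in the statement, and induct on $\ell$ to prove that each $E_i$ is numerically flat. Each quotient $E_i/E_{i-1}$ admits a unitary flat connection, hence is Hermitian flat, so both it and its dual are nef \cite{DPS} and it is therefore numerically flat. As recalled in the proof of Proposition \ref{prop1}, an extension of a nef bundle by a nef bundle is nef \cite[p.~308, Proposition~1.15(ii)]{DPS}; applying this to the short exact sequence $0\,\longrightarrow\, E_{i-1}\,\longrightarrow\, E_i\,\longrightarrow\, E_i/E_{i-1}\,\longrightarrow\, 0$ and to its dual shows that $E_i$ is numerically flat whenever $E_{i-1}$ is. Taking $i\,=\,\ell$ yields the numerical flatness of $E$. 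Note that neither the flatness of $\nabla$ nor the Fujiki hypothesis is used in this direction.

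The one point that calls for a moment's care — bookkeeping rather than a genuine obstacle — is the descent of the unitary flatness of the quotient connections through $f_*$ in the forward implication: one must check that under the isomorphism $f_*$ the monodromy of the induced connection on $E_i/E_{i-1}$ corresponds to that of the induced connection on $V_i/V_{i-1}$, so that one is conjugate into a unitary group exactly when the other is. Every remaining step is a verbatim transcription of steps already carried out in the proof of Proposition \ref{prop1}.
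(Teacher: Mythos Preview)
Your proposal is correct and follows essentially the same route as the paper: the reverse implication via unitary flat quotients being numerically flat and closure of numerical flatness under extensions \cite[Proposition~1.15(ii)]{DPS}, and the forward implication by pulling back along $f$, applying the structure theorem of \cite{DPS} and \cite{BG} to $f^*E$, and descending through the isomorphism $f_*\,:\,\pi_1(Y)\,\longrightarrow\,\pi_1(M)$ exactly as in the proof of Proposition~\ref{prop1}. Your extra remarks on the monodromy bookkeeping for the unitary quotients and on which hypotheses are actually used in each direction are accurate refinements of what the paper leaves implicit.
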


\begin{proof}
If there is a filtration as above and a flat holomorphic $\nabla$ on $E$ satisfying the above two conditions,
then $E_i/E_{i-1}$ is numerically flat for all $1\, \leq\, i\, \leq\, \ell$. Therefore, using
\cite[p.~308, Propoisition 1.15(ii)]{DPS} it is deduced that $E$ is numerically flat.

To prove the converse assume that $E$ is numerically flat. Take any 
surjective bimeromorphic map
$$
f\,:\, Y\,\longrightarrow\, X
$$
as in Proposition \ref{prop1} with $Y$ K\"ahler. Then $f^*E$ is numerically flat by Proposition \ref{prop1}. As we 
saw in the proof of Proposition \ref{prop1}, this implies that there is a filtration as in the statement of the 
corollary and a flat holomorphic $\nabla$ on $E$ satisfying the two conditions in the statement of the corollary. 
\end{proof}

\section{Principal bundles on Fujiki manifolds}

Let $G$ be a connected complex semisimple affine algebraic group with one simple factor.
Fix a parabolic subgroup
$$
P\, \subsetneq\, G\, .
$$
The Lie algebras of $G$ and $P$ will be denoted by $\mathfrak g$ and $\mathfrak p$ respectively.

As before, $M$ is a compact connected Fujiki manifold. Let $E_G$ be a holomorphic principal $G$--bundle
on $M$. Let
\begin{equation}\label{e2}
\text{ad}(E_G)\, :=\, E_G\times^G \mathfrak g\, \longrightarrow\, M
\end{equation}
be the adjoint bundle for $E_G$; its fibers are Lie algebras identified with $\mathfrak g$ uniquely up to
conjugations.

Let $p\, :\, E_G \, \longrightarrow\, E_G/P$ and
\begin{equation}\label{e5}
\varphi\, :\, E_G/P\, \longrightarrow\, M
\end{equation}
be the natural projections. Consider the projection
$$
\varphi^*E_G\, :=\, (E_G/P)\times_M E_G\, \xrightarrow{\,{\rm Id}\times p\,\,}\, (E_G/P)\times_M (E_G/P)\, .
$$
Let
\begin{equation}\label{e4}
{\mathcal E}_P\, \subset\, \varphi^*E_G
\end{equation}
be the inverse image of the diagonal $E_G/P\, \subset\, (E_G/P)\times_M (E_G/P)$ under this
projection. It is straight-forward
to check that ${\mathcal E}_P$ is a holomorphic reduction of structure group of the principal $G$--bundle
$\varphi^*E_G$ to the subgroup $P\, \subset\, G$.

\begin{theorem}\label{thm1}
Let $E_G$ be a holomorphic principal $G$--bundle on a compact connected Fujiki
manifold $M$. Then the following two statements are equivalent:
\begin{enumerate}
\item The holomorphic vector bundle ${\rm ad}(E_G)$ in \eqref{e2} is numerically flat.

\item The holomorphic line bundle $\bigwedge^{\rm top} {\rm ad}({\mathcal E}_P)^*\, \longrightarrow\,
E_G/P$ is nef, where ${\mathcal E}_P$ in the holomorphic principal $P$--bundle constructed in \eqref{e4}.
\end{enumerate}
\end{theorem}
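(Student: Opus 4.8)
The plan is to prove the two implications separately; the implication (1)~$\Rightarrow$~(2) is elementary, while (2)~$\Rightarrow$~(1) requires a reduction to the case of a compact K\"ahler base followed by an argument in the spirit of \cite{BB}.

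\emph{The implication} (1)~$\Rightarrow$~(2). On $E_G/P$ there is the short exact sequence of holomorphic vector bundles
$$
0\,\longrightarrow\, {\rm ad}({\mathcal E}_P)\,\longrightarrow\, \varphi^*{\rm ad}(E_G)\,\longrightarrow\, T_\varphi\,\longrightarrow\, 0\, ,
$$
where $\varphi$ is the projection in \eqref{e5}, $T_\varphi$ is its relative holomorphic tangent bundle, and the surjection is the canonical homomorphism ${\mathcal E}_P\times^P{\mathfrak g}\,\longrightarrow\,{\mathcal E}_P\times^P({\mathfrak g}/{\mathfrak p})$ induced by the quotient $P$--module ${\mathfrak g}/{\mathfrak p}$; here ${\mathcal E}_P$ is the reduction in \eqref{e4}, one uses $\varphi^*{\rm ad}(E_G)\,=\,{\rm ad}(\varphi^*E_G)\,=\,{\mathcal E}_P\times^P{\mathfrak g}$, and ${\mathcal E}_P\times^P({\mathfrak g}/{\mathfrak p})\,=\,T_\varphi$ is the standard identification of the relative tangent bundle of a parabolic quotient. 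Since $G$ is semisimple the adjoint representation has trivial determinant, so $\bigwedge^{\rm top}{\rm ad}(E_G)\,=\,{\mathcal O}_M$; taking top exterior powers in the displayed sequence therefore yields the isomorphism $\bigwedge^{\rm top}{\rm ad}({\mathcal E}_P)^*\,=\,\bigwedge^{\rm top}T_\varphi$. Now if ${\rm ad}(E_G)$ is numerically flat then it is in particular nef, so $\varphi^*{\rm ad}(E_G)$ is nef by \cite[p.~305, Proposition 1.10]{DPS}; being a holomorphic quotient of a nef bundle, $T_\varphi$ is nef, and hence so is the line bundle $\bigwedge^{\rm top}T_\varphi\,=\,\bigwedge^{\rm top}{\rm ad}({\mathcal E}_P)^*$, because exterior powers of nef vector bundles are nef. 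This direction uses neither that $M$ is Fujiki nor Corollary \ref{cor1}.

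\emph{Reduction of} (2)~$\Rightarrow$~(1) \emph{to the compact K\"ahler case.} I would fix a surjective bimeromorphic map $f\,:\,Y\,\longrightarrow\,M$ with $Y$ compact connected K\"ahler and put $E'_G\,:=\,f^*E_G$. Base changing the fiber bundle $\varphi$ along $f$ identifies $E'_G/P$ with $Y\times_M(E_G/P)$ and the projection $E'_G/P\,\longrightarrow\,Y$ with the pullback of $\varphi$; by the functoriality of the construction in \eqref{e4}, the tautological reduction ${\mathcal E}'_P$ of $E'_G$ is the pullback of ${\mathcal E}_P$ along the natural map $\psi\,:\,E'_G/P\,\longrightarrow\,E_G/P$, so $\bigwedge^{\rm top}{\rm ad}({\mathcal E}'_P)^*\,=\,\psi^*\bigwedge^{\rm top}{\rm ad}({\mathcal E}_P)^*$. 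Since pullbacks of nef line bundles are nef, statement (2) for $E_G$ over $M$ implies statement (2) for $E'_G$ over $Y$; and as ${\rm ad}(E'_G)\,=\,f^*{\rm ad}(E_G)$, Proposition \ref{prop1} reduces us to proving that statement (2) for $E'_G$ over the compact K\"ahler manifold $Y$ implies the numerical flatness of ${\rm ad}(E'_G)$. Thus from now on $M$ may be assumed compact K\"ahler.

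\emph{The implication} (2)~$\Rightarrow$~(1) \emph{over a compact K\"ahler base, and the main obstacle.} Here I would follow the argument of \cite{BB}. First, pulling back the nef line bundle $\bigwedge^{\rm top}{\rm ad}({\mathcal E}_P)^*$ along sections shows that for every reduced irreducible compact complex analytic space $Z$ with a K\"ahler form $\omega$, every holomorphic $\gamma\,:\,Z\,\longrightarrow\,M$ and every reduction $E_P\,\subset\,\gamma^*E_G$ to $P$, one has ${\rm degree}({\rm ad}(E_P))\,\leq\,0$, because such a reduction is a section of $\gamma^*(E_G/P)\,\longrightarrow\,Z$ along which the pullback of $\bigwedge^{\rm top}{\rm ad}({\mathcal E}_P)^*$ restricts to the nef line bundle $\bigwedge^{\rm top}{\rm ad}(E_P)^*$. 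Testing this on sections over suitable flag bundles over $M$ yields ${\rm degree}_\omega({\rm ad}(E_Q))\,\leq\,0$ for every parabolic reduction $E_Q\,\subset\,E_G$ and every K\"ahler class on $M$, so ${\rm ad}(E_G)$ is semistable with respect to every K\"ahler class, and $c_1({\rm ad}(E_G))\,=\,0$ since $\bigwedge^{\rm top}{\rm ad}(E_G)$ is trivial. One then uses the nefness of $\bigwedge^{\rm top}{\rm ad}({\mathcal E}_P)^*$ once more to force the second Chern class of ${\rm ad}(E_G)$ to vanish, and concludes, via the characterization of numerical flatness by pseudostability together with numerically trivial Chern classes — the same circle of ideas already invoked in the proof of Proposition \ref{prop1}, available over compact K\"ahler manifolds by \cite{BG} — that ${\rm ad}(E_G)$ is numerically flat. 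The hard part is precisely this last step: carrying out the argument of \cite{BB}, written for smooth projective varieties, over a compact K\"ahler base, which requires the Harder--Narasimhan and semistability formalism for principal bundles over compact K\"ahler manifolds and the extraction of the vanishing of $c_2({\rm ad}(E_G))$ from the nefness of the relative anticanonical bundle. Everything else — the implication (1)~$\Rightarrow$~(2) and the reduction to the K\"ahler case — is routine.
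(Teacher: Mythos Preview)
Your argument for (1)~$\Rightarrow$~(2) is correct and in fact more direct than the paper's: the paper does not use the short exact sequence $0\to{\rm ad}({\mathcal E}_P)\to\varphi^*{\rm ad}(E_G)\to T_\varphi\to 0$ at all, but instead shows that the direct image $\varphi_*\bigwedge^{\rm top}{\rm ad}({\mathcal E}_P)^*$ is a direct summand of a sum of tensor powers $\bigoplus_j{\rm ad}(E_G)^{\otimes t_j}$ (via the Tannakian fact that every $G/Z_G$--module embeds as a summand in tensor powers of the faithful module $\mathfrak g$), hence nef, and then uses relative very ampleness to conclude that $\bigwedge^{\rm top}{\rm ad}({\mathcal E}_P)^*$ itself is nef. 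Your route via the quotient $T_\varphi$ of $\varphi^*{\rm ad}(E_G)$ is shorter and buys the implication with less machinery.

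For (2)~$\Rightarrow$~(1), however, your proposal has a genuine gap at exactly the point you flag, and the paper resolves it by a completely different mechanism that avoids the semistability and $c_2$ discussion altogether. The paper observes that $\bigwedge^{\rm top}{\rm ad}({\mathcal E}_P)^* \,=\, (\bigwedge^{\rm top}{\rm ad}({\mathcal E}_P)^*)^{\otimes 2}\otimes K_\varphi$, so nefness of the left side allows one to invoke Mourougane's positivity theorem \cite[Th\'eor\`eme 2]{Mo} and deduce that the direct image $W:=\varphi_*(\bigwedge^{\rm top}{\rm ad}({\mathcal E}_P)^*)^{\otimes 2}$ is nef on $M$. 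But $W$ is the bundle associated to $E_G$ via the $G$--module $H^0(G/P,(K^{-1}_{G/P})^{\otimes 2})$; since $G$ is semisimple, $\det W$ is trivial, so $W$ and hence ${\rm End}(W)$ are numerically flat. Because $G$ has one simple factor, $G/Z_G$ is simple without center and its representation on $H^0(G/P,(K^{-1}_{G/P})^{\otimes 2})$ is faithful, so the $G$--module $\mathfrak g$ is a direct summand of ${\rm End}\bigl(H^0(G/P,(K^{-1}_{G/P})^{\otimes 2})\bigr)$; passing to associated bundles, ${\rm ad}(E_G)$ is a direct summand of the numerically flat bundle ${\rm End}(W)$, hence numerically flat. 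This argument works directly on $M$ (it needs neither the reduction to a K\"ahler base, nor Harder--Narasimhan theory, nor the extraction of $c_2=0$), and in particular it sidesteps the problem of transporting \cite{BB} to the K\"ahler category that you identify as the obstacle. Your vague step ``testing on sections over suitable flag bundles yields ${\rm degree}_\omega({\rm ad}(E_Q))\le 0$ for \emph{every} parabolic $Q$'' is also not justified: the hypothesis controls only $P$--reductions, and you have not explained how to pass to arbitrary $Q$.
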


\begin{proof}
First assume that ${\rm ad}(E_G)$ is numerically flat.
We will show that $\bigwedge^{\rm top} {\rm ad}({\mathcal E}_P)^*$ is nef. For that the following
lemma will be used.

\begin{lemma}\label{lem1}
The direct image $\varphi_*\bigwedge^{\rm top} {\rm ad}({\mathcal E}_P)^*\, \longrightarrow\, M$, where
$\varphi$ is the projection in \eqref{e5}, is a vector bundle of positive rank. If this vector bundle $\varphi_*
\bigwedge^{\rm top} {\rm ad}({\mathcal E}_P)^*$ is nef, then $\bigwedge^{\rm top} {\rm ad}({\mathcal E}_P)^*$ is
also nef.
\end{lemma}

\begin{proof}[{Proof of Lemma \ref{lem1}}]
The line bundle
$$\bigwedge\nolimits^{\rm top} {\rm ad}({\mathcal E}_P)^*\, \longrightarrow\, E_G/P$$
is the relative anti-canonical line bundle for the projection $\varphi$. So $\bigwedge^{\rm top} {\rm ad}({\mathcal 
E}_P)^*$ is relatively ample (the anti-canonical line bundle of $G/P$ is ample). Therefore, $\bigwedge^{\rm top} 
{\rm ad}({\mathcal E}_P)^*$ is relatively very ample (an ample line bundle on $G/P$ is very ample; see \cite[Theorem 
6.5(2)]{Sn}, \cite{Se}). Also the higher direct images of $\bigwedge^{\rm top} {\rm ad}({\mathcal E}_P)^*$ vanish
by the Kodaira vanishing theorem.
These imply that $\varphi_*\bigwedge^{\rm top} {\rm ad}({\mathcal E}_P)^* \, \longrightarrow\, M$ is a holomorphic 
vector bundle of positive rank.

Since $\bigwedge^{\rm top} {\rm ad}({\mathcal E}_P)^*$ is relatively very ample, we get an embedding
$$
\eta\, :\, E_G/P\, \longrightarrow\, {\mathbb P}\left(\varphi_*\bigwedge\nolimits^{\rm top}
{\rm ad}({\mathcal E}_P)^*\right)\, .
$$
The pulled back line bundle $\eta^*{\mathcal O}_{{\mathbb P}(\varphi_*\bigwedge\nolimits^{\rm top} {\rm 
ad}({\mathcal E}_P)^*)}(1)$ is identified with $\bigwedge^{\rm top} {\rm ad}({\mathcal E}_P)^*$. Consequently, 
$\bigwedge^{\rm top} {\rm ad}({\mathcal E}_P)^*$ is nef if $\varphi_* \bigwedge^{\rm top} {\rm ad}({\mathcal 
E}_P)^*$ is nef, because ${\mathcal O}_{{\mathbb P}(\varphi_*\bigwedge\nolimits^{\rm top} {\rm ad}({\mathcal 
E}_P)^*)}(1)$ is nef in that case.
\end{proof}

Let
\begin{equation}\label{e6}
Z_G\, \subset\, G
\end{equation}
be the center of $G$. We note that the left-translation action of $G$ on $G/P$ produces an action of $G$ on
$H^0(G/P,\, K^{-1}_{G/P})$, where $K^{-1}_{G/P}$ is the anti-canonical line
bundle. The action of $Z_G$ on $G/P$ is trivial because $Z_G\, \subset\, P$. Also, the
action of $Z_G$ on $H^0(G/P,\, K^{-1}_{G/P})$ is trivial, because the adjoint action of
$Z_G$ is trivial. So we get an action of $G/Z_G$ on $H^0(G/P,\, K^{-1}_{G/P})$.

Since $G$ is semisimple, the adjoint action of $G/Z_G$ on $\mathfrak g$ is faithful. Also, the $G/Z_G$--module 
$\mathfrak g$ is isomorphic to ${\mathfrak g}^*$ using the Killing form on $\mathfrak g$. Therefore, there are 
nonnegative integers $t_1,\, \cdots,\, t_n$ such that the $G/Z_G$--module $H^0(G/P,\, K^{-1}_{G/P})$ is a direct 
summand of the $G/Z_G$--module
$$
\bigoplus_{j=1}^n {\mathfrak g}^{\otimes t_j}
$$
(see \cite[p.~40, Proposition 3.1(a)]{DM}). This implies that the holomorphic vector bundle
$\varphi_* \bigwedge^{\rm top} {\rm ad}({\mathcal E}_P)^*$ is a direct summand of the holomorphic vector bundle
$$
\bigoplus_{j=1}^n {\rm ad}(E_G)^{\otimes t_j}\, .
$$
Indeed, if $\bigoplus_{j=1}^n {\mathfrak g}^{\otimes t_j}\,=\, H^0(G/P,\, K^{-1}_{G/P})\oplus A$,
where $A$ is a $G/Z_G$--module, then
$$
\bigoplus_{j=1}^n {\rm ad}(E_G)^{\otimes t_j}\,=\, \left(\varphi_* \bigwedge\nolimits^{\rm top} {\rm ad}
({\mathcal E}_P)^*\right)\oplus \widetilde{A}\, ,
$$
where $\widetilde{A}\, \longrightarrow\, M$ is the holomorphic vector bundle associated to the principal
$G$--bundle $E_G$ for the $G$--module $A$ (any $G/Z_G$--module is also a $G$--module).

Since ${\rm ad}(E_G)$ is nef, the vector bundle ${\rm ad}(E_G)^{\otimes t_j}$ is nef \cite[p.~307, Proposition
1.14(i)]{DPS}, hence $\bigoplus_{j=1}^n {\rm ad}(E_G)^{\otimes t_j}$ is nef \cite[p.~308, Proposition
1.15(ii)]{DPS}, and therefore, its direct summand $\varphi_* \bigwedge^{\rm top} {\rm ad}({\mathcal E}_P)^*$
is nef \cite[p.~308, Proposition 1.15(i)]{DPS}. Now Lemma \ref{lem1} implies that
$\bigwedge^{\rm top} {\rm ad}({\mathcal E}_P)^*$ is nef.

To prove the converse, assume that $\bigwedge^{\rm top} {\rm ad}({\mathcal E}_P)^*$ is nef.
We will prove that ${\rm ad}(E_G)$ is numerically flat.

Since the holomorphic line bundle
$$
\bigwedge\nolimits^{\rm top} {\rm ad}({\mathcal E}_P)^*\,=\,
\left(\bigwedge\nolimits^{\rm top} {\rm ad}({\mathcal E}_P)^*\right)^{\otimes 2}\otimes
\bigwedge\nolimits^{\rm top} {\rm ad}({\mathcal E}_P)\,=\,
\left(\bigwedge\nolimits^{\rm top} {\rm ad}({\mathcal E}_P)^*\right)^{\otimes 2}\otimes K_\varphi
$$
is nef, where $K_\varphi$ is the relative canonical bundle for the projection $\varphi$ in \eqref{e5},
we conclude that the direct image $\varphi_* (\bigwedge\nolimits^{\rm top} {\rm ad}({\mathcal
E}_P)^*)^{\otimes 2}$ is a nef vector bundle \cite[p.~895, Th\'eor\`eme 2]{Mo}.

On the other hand, $\varphi_* (\bigwedge\nolimits^{\rm top} {\rm ad}({\mathcal E}_P)^*)^{\otimes 2}$ coincides with 
the holomorphic vector bundle on $M$ associated to the principal $G$--bundle $E_G$ for the $G$--module $H^0(G/P,\, 
(K^{-1}_{G/P})^{\otimes 2})$. Since the group $G$ is semisimple, it does not have any nontrivial character, in 
particular, $\bigwedge^{\rm top} H^0(G/P,\, (K^{-1}_{G/P})^{\otimes 2})$ is the trivial $G$--module. This implies 
that the associated holomorphic line bundle $\bigwedge^{\rm top} \left(\varphi_* (\bigwedge\nolimits^{\rm top} {\rm 
ad}({\mathcal E}_P)^*)^{\otimes 2}\right)$ is trivial. Since $\varphi_* (\bigwedge\nolimits^{\rm top} {\rm 
ad}({\mathcal E}_P)^*)^{\otimes 2}$ is nef, this implies that $\varphi_* (\bigwedge\nolimits^{\rm top} {\rm 
ad}({\mathcal E}_P)^*)^{\otimes 2}$ is numerically flat (see \cite[p.~311, Definition 1.17]{DPS}).
Since the tensor product of two nef bundles is nef \cite[p.~307, Proposition 1.14(i)]{DPS}, and both
$\varphi_* (\bigwedge\nolimits^{\rm top} {\rm ad}({\mathcal E}_P)^*)^{\otimes 2}$ and
$\varphi_* (\bigwedge\nolimits^{\rm top} {\rm ad}({\mathcal E}_P)^*)^{\otimes 2}$ are nef, we conclude that
that the vector bundle
\begin{equation}\label{e7}
\text{End}(\varphi_* (\bigwedge\nolimits^{\rm top} {\rm ad}({\mathcal E}_P)^*)^{\otimes 2})\,=\,
(\varphi_* (\bigwedge\nolimits^{\rm top} {\rm ad}({\mathcal E}_P)^*)^{\otimes 2})\otimes
(\varphi_* (\bigwedge\nolimits^{\rm top} {\rm ad}({\mathcal E}_P)^*)^{\otimes 2})^*
\end{equation}
is nef. As $\bigwedge\nolimits^{\rm top}\text{End}
(\varphi_* (\bigwedge\nolimits^{\rm top} {\rm ad}({\mathcal E}_P)^*)^{\otimes 2})$ is the trivial line bundle,
it now follows that $\text{End}(\varphi_* (\bigwedge\nolimits^{\rm top} {\rm ad}({\mathcal E}_P)^*)^{\otimes 2})$
is numerically flat (see \cite[p.~311, Definition 1.17]{DPS}).

Consider the $G/Z_G$--module $H^0(G/P,\, (K^{-1}_{G/P})^{\otimes 2})$ (the center $Z_G$ acts trivially on it). Let
$$
\rho\, :\, G/Z_G\, \longrightarrow\, \text{GL}(H^0(G/P,\, (K^{-1}_{G/P})^{\otimes 2}))
$$
be the corresponding homomorphism. We note that $G/Z_G$ is simple without center because $G$ has only one simple
factor. Hence the above homomorphism $\rho$ is injective. Therefore, the $G$--module $\text{Lie}(G/Z_G)
\,=\,\mathfrak g$ is a direct summand of the $G$--module
$$
\text{Lie}(\text{GL}(H^0(G/P,\, (K^{-1}_{G/P})^{\otimes 2})))\,=\, {\rm End}(H^0(G/P,\, (K^{-1}_{G/P})^{\otimes 2}))\,.
$$
On the other hand, the holomorphic vector bundle on $M$ associated to the principal $G$--bundle $E_G$ for the
$G$--module ${\rm End}(H^0(G/P,\, (K^{-1}_{G/P})^{\otimes 2}))$ coincides with the vector bundle in \eqref{e7}.
Since the $G$--module $\mathfrak g$ is a direct summand of
${\rm End}(H^0(G/P,\, (K^{-1}_{G/P})^{\otimes 2}))$,
we conclude that $\text{ad}(E_G)$ is a direct summand of the vector bundle in \eqref{e7}. We saw that the
vector bundle in \eqref{e7} is numerically flat. So its direct summand $\text{ad}(E_G)$ is also
numerically flat \cite[p.~308, Proposition 1.15(i)]{DPS}. This completes the proof of the theorem.
\end{proof}

\section{Pullback to K\"ahler manifolds}

Take $M$ and $E_G$ as before. Let
\begin{equation}\label{f1}
(Z,\, \omega)
\end{equation}
be a reduced irreducible compact complex analytic space $Z$ with a K\"ahler form $\omega$, and let
\begin{equation}\label{f2}
\gamma\, :\, Z\, \longrightarrow\, M
\end{equation}
be a holomorphic map. Consider the holomorphic principal $G$--bundle $\gamma^*E_G$ on $Z$.
Giving a holomorphic reduction of structure group
$$
E_P\, \subset\, \gamma^*E_G
$$
of the principal $G$--bundle $\gamma^*E_G$ to $P\, \subset\, G$ is equivalent to giving a holomorphic section
$$
\sigma\, :\, Z\, \longrightarrow\, (\gamma^*E_G)/P\, =\, \gamma^*(E_G/P)
$$
of the natural projection $(\gamma^*E_G)/P\, \longrightarrow\, Z$. Indeed, the inverse image
of $\sigma(Z)\, \subset\, (\gamma^*E_G)/P$ for the quotient map $\gamma^*E_G\, \longrightarrow\,
(\gamma^*E_G)/P$ is a holomorphic reduction of structure group of $\gamma^*E_G$ to $P$.

Take any holomorphic reduction of structure group $E_P\, \subset\, \gamma^*E_G$ of $\gamma^*E_G$ to $P$.
Let
\begin{equation}\label{e3}
\text{ad}(E_P)\, :=\, E_P\times^P \mathfrak p\, \longrightarrow\, Z
\end{equation}
be the adjoint bundle of $E_P$; the inclusion map of $\mathfrak p$ in $\mathfrak g$ produces a
map $$\text{ad}(E_P)\, \longrightarrow\, \text{ad}(\gamma^*E_G)\,=\,\gamma^*\text{ad}(E_G)\, ,$$
so $\text{ad}(E_P)$ is a subbundle of $\gamma^*\text{ad}(E_G)$.

For a holomorphic vector bundle $W$ on $Z$, define
$$
\text{degree}(W)\, :=\, \int_Z c_1(W)\wedge\omega^{d-1}\, \in\, \mathbb R\, ,
$$
where $d\,=\, \dim_{\mathbb C} Z$; see \cite[p.~168, (7.1)]{Ko}.

\begin{theorem}\label{thm2}
Let $E_G$ be a holomorphic principal $G$--bundle on a compact connected Fujiki manifold $M$. Then
the following two statements are equivalent:
\begin{enumerate}
\item The holomorphic vector bundle ${\rm ad}(E_G)$ in \eqref{e2} is numerically flat.

\item For every triple $(Z,\, \omega,\, \gamma)$ as in \eqref{f1} and \eqref{f2}, and
every holomorphic reduction of structure group $E_P\, \subset\, \gamma^*E_G$ of $\gamma^*E_G$ to
$P$, the inequality
$$
{\rm degree}({\rm ad}(E_P))\, \leq\, 0
$$
holds, where ${\rm ad}(E_P)$ is the adjoint bundle in \eqref{e3}.
\end{enumerate}
\end{theorem}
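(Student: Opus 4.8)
The plan is to prove the two implications separately: $(1)\Rightarrow(2)$ by a direct computation with the reduction, and $(2)\Rightarrow(1)$ by reducing, via Theorem \ref{thm1}, to a nefness statement for a line bundle on $E_G/P$. For $(1)\Rightarrow(2)$, assume ${\rm ad}(E_G)$ is numerically flat and fix a triple $(Z,\,\omega,\,\gamma)$ as in \eqref{f1}--\eqref{f2} together with a reduction $E_P\,\subset\,\gamma^*E_G$. Since the pullback of a nef vector bundle is nef, $\gamma^*{\rm ad}(E_G)\,=\,{\rm ad}(\gamma^*E_G)$ is numerically flat, in particular nef. The inclusion $\mathfrak p\,\hookrightarrow\,\mathfrak g$ being $P$--equivariant, the quotient $\gamma^*{\rm ad}(E_G)/{\rm ad}(E_P)$ is the holomorphic vector bundle $E_P\times^P(\mathfrak g/\mathfrak p)$; as a quotient of a nef bundle it is nef, hence so is its determinant line bundle, and therefore $\int_Z c_1(E_P\times^P(\mathfrak g/\mathfrak p))\wedge\omega^{d-1}\,\geq\,0$ (a nef line bundle has nonnegative degree against a K\"ahler form; if $Z$ is singular, pull everything back to a resolution $\pi\,:\,\widetilde Z\,\to\,Z$ and pair with the semipositive closed form $(\pi^*\omega)^{d-1}$). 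On the other hand $c_1({\rm ad}(E_G))\,=\,0$, because the semisimple group $G$ has no nontrivial character and so $\bigwedge^{\rm top}{\rm ad}(E_G)$ is trivial; hence the exact sequence $0\to{\rm ad}(E_P)\to\gamma^*{\rm ad}(E_G)\to E_P\times^P(\mathfrak g/\mathfrak p)\to 0$ gives ${\rm degree}({\rm ad}(E_P))\,=\,-\int_Z c_1(E_P\times^P(\mathfrak g/\mathfrak p))\wedge\omega^{d-1}\,\leq\,0$.

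For $(2)\Rightarrow(1)$ I would first reduce to the case that $M$ is K\"ahler. Choosing a surjective bimeromorphic map $f\,:\,Y\,\to\,M$ with $Y$ compact connected K\"ahler, statement $(2)$ for $(M,\,E_G)$ implies statement $(2)$ for $(Y,\,f^*E_G)$ (a triple $(Z,\,\omega,\,g)$ over $Y$ gives the triple $(Z,\,\omega,\,f\circ g)$ over $M$, and $g^*f^*E_G=(f\circ g)^*E_G$); and once one knows the theorem for $Y$, one concludes that ${\rm ad}(f^*E_G)=f^*{\rm ad}(E_G)$ is numerically flat, whence ${\rm ad}(E_G)$ is numerically flat by Proposition \ref{prop1}. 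So assume $M$ is K\"ahler. As in the proof of Lemma \ref{lem1}, $L\,:=\,\bigwedge^{\rm top}{\rm ad}(\mathcal E_P)^*$ is relatively very ample for $\varphi$, so $E_G/P$ embeds over $M$ in the projectivization of a holomorphic vector bundle on $M$ and is in particular K\"ahler; and by Theorem \ref{thm1} it suffices to show that $L$ is nef.

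The link with hypothesis $(2)$ is the following correspondence. A holomorphic map $\delta\,:\,Z\,\to\,E_G/P$ from a reduced irreducible compact complex analytic space determines $\gamma\,:=\,\varphi\circ\delta$ together with the section $z\mapsto(z,\,\delta(z))$ of $\gamma^*(E_G/P)\to Z$, hence a reduction $E_P\,\subset\,\gamma^*E_G$; every reduction arises this way; and $E_P=\delta^*\mathcal E_P$, so that $\delta^*L=\bigwedge^{\rm top}{\rm ad}(E_P)^*$ and ${\rm degree}(\delta^*L)=-\,{\rm degree}({\rm ad}(E_P))$. Hence hypothesis $(2)$ says precisely that $\int_Z c_1(\delta^*L)\wedge\omega^{\dim_{\mathbb C}Z-1}\,\geq\,0$ for every holomorphic $\delta$ from every reduced irreducible compact complex analytic space $Z$ carrying a K\"ahler form $\omega$.

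It then remains to deduce nefness of $L$ from this property, for which I would prove (or locate in the literature) a criterion of the shape: a line bundle $L$ on a compact connected K\"ahler manifold $W$ is nef if and only if ${\rm degree}_\omega(\delta^*L)\,\geq\,0$ for every holomorphic $\delta\,:\,Z\,\to\,W$ from a reduced irreducible compact complex analytic space with a K\"ahler form $\omega$. One direction is immediate (pullback of nef is nef, and a nef line bundle has nonnegative degree against a K\"ahler, or after resolution semipositive, form). The converse is where the real difficulty lies: on a merely K\"ahler, possibly non-projective, manifold nefness is not detected by curves, so one must use the whole family of test data $(Z,\,\omega,\,\delta)$, through the Demailly--P\u{a}un type duality between the closure of the K\"ahler cone of $W$ and the cone spanned by push-forwards of products of K\"ahler forms from the spaces mapping to $W$. \emph{I expect this last step to be the main obstacle.}
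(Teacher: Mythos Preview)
Your argument is correct, and the anticipated obstacle is not one: the criterion you are looking for is exactly the Demailly--Paun characterization of nefness \cite[Corollary~0.4]{DP}, which the paper invokes directly. In fact you need much less than you ask for: it suffices to test on closed irreducible analytic \emph{subspaces} $\mathcal S\,\subset\,(f^*E_G)/P$ with the inclusion map, using the restriction of a K\"ahler form on the ambient manifold --- you do not need arbitrary holomorphic maps $\delta$ from outside. Since hypothesis $(2)$ certainly covers that special case (take $Z=\mathcal S$, $\gamma=f\circ\psi\big\vert_{\mathcal S}$, and $E_P$ the restriction of the tautological reduction), nefness of $L$ follows. With this, your proof of $(2)\Rightarrow(1)$ coincides with the paper's.

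For $(1)\Rightarrow(2)$ you take a genuinely different and more elementary route. The paper passes through Theorem~\ref{thm1}: from numerical flatness of ${\rm ad}(\gamma^*E_G)$ it deduces that $\bigwedge^{\rm top}{\rm ad}(\mathcal F_P)^*$ is nef on $(\gamma^*E_G)/P$, and then pulls back along the section $\beta$ corresponding to $E_P$. Your argument avoids Theorem~\ref{thm1} entirely, using only that a quotient of a nef bundle is nef together with $c_1({\rm ad}(E_G))=0$. This is cleaner and self-contained; the paper's detour, on the other hand, makes the logical structure $(1)\Leftrightarrow(\text{nefness of }L)\Leftrightarrow(2)$ more visible.
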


\begin{proof}
First assume that ${\rm ad}(E_G)$ is numerically flat. Take any $(Z,\, \omega,\, \gamma)$ as in
\eqref{f1} and \eqref{f2}. Since ${\rm ad}(E_G)$ is numerically flat, it follows that
$\gamma^*{\rm ad}(E_G)\,=\, {\rm ad}(\gamma^*E_G)$ is also numerically flat \cite[p.~305, Proposition 1.10]{DPS}.

Let
\begin{equation}\label{phi}
\phi\, :\, (\gamma^*E_G)/P\, \longrightarrow\, Z
\end{equation}
be the natural projection.
The holomorphic principal $P$--bundle $$\gamma^*E_G\, \longrightarrow\, (\gamma^*E_G)/P\,=\, \gamma^*(E_G/P)$$
over $(\gamma^*E_G)/P$ will be denoted by ${\mathcal F}_P$; it is a holomorphic reduction of structure group
of the principal $G$--bundle $\phi^*\gamma^*E_G$
to $P\, \subset\, G$. Since $\gamma^*{\rm ad}(E_G)$ is numerically flat, from Theorem \ref{thm1} we know
that the line bundle $\bigwedge^{\rm top} {\rm ad}({\mathcal F}_P)^*$ is nef.

Let $E_P\, \subset\, \gamma^*E_G$ be a holomorphic reduction of structure group of
the principal $G$--bundle $\gamma^*E_G$ to the subgroup $P$. It corresponds to
a section
$$
\beta\, :\, Z\, \longrightarrow\, (\gamma^*E_G)/P
$$
of the projection $\phi$ in \eqref{phi}; the holomorphic principal $P$--bundle $E_P$ is the pullback
$\beta^*{\mathcal F}_P$, where ${\mathcal F}_P$ is the principal $P$--bundle defined above. Therefore, we have
$$
\bigwedge\nolimits^{\rm top} {\rm ad}(E_P)\,=\, \beta^*\bigwedge\nolimits^{\rm top} {\rm ad}({\mathcal F}_P)\, .
$$
This, and the above observation that $\bigwedge^{\rm top} {\rm ad}({\mathcal F}_P)^*$ is nef, together
imply that $\bigwedge\nolimits^{\rm top} {\rm ad}(E_P)^*$ is nef. This immediately implies that
$$
{\rm degree}({\rm ad}(E_P))\, \leq\, 0\, .
$$

To prove that converse, assume that
$$
{\rm degree}({\rm ad}(E_P))\, \leq\, 0
$$
for every triple $(Z,\, \omega,\, \gamma)$ as in \eqref{f1} and \eqref{f2}, and
every holomorphic reduction of structure group $E_P\, \subset\, \gamma^*E_G$ of $\gamma^*E_G$ to $P$.
We will prove that ${\rm ad}(E_G)$ is numerically flat.

Take a surjective bimeromorphic map
\begin{equation}\label{ef}
f\,:\, Y\,\longrightarrow\, M\, ,
\end{equation}
where $Y$ is a compact connected K\"ahler manifold. From
Proposition \ref{prop1} we know that ${\rm ad}(E_G)$ is numerically flat if $f^*{\rm ad}(E_G)
\,=\, {\rm ad}(f^*E_G)$ is numerically flat.

Let
\begin{equation}\label{psi}
\psi\, :\, (f^*E_G)/P\,=\, f^*(E_G/P)\, \longrightarrow\, Y
\end{equation}
be the natural projection. The holomorphic principal $P$--bundle
\begin{equation}\label{tf}
f^*E_G\, \longrightarrow\, (f^*E_G)/P\,=\, f^*(E_G/P)
\end{equation}
will be denoted by $\widetilde{\mathcal F}_P$. Note that $\widetilde{\mathcal F}_P$ is a holomorphic
reduction of structure group of the principal $G$--bundle $(f\circ\psi)^*E_G\,=\, \psi^*f^*E_G$ to
$P\, \subset\, G$. To show that $f^*{\rm ad}(E_G)$ is numerically flat,
first note that Theorem \ref{thm1} says that it suffices to
prove that $\bigwedge^{\rm top} {\rm ad}(\widetilde{\mathcal F}_P)^*$ is nef. Now, to prove that
$\bigwedge^{\rm top} {\rm ad}(\widetilde{\mathcal F}_P)^*$ is nef, we will use the following
criterion of Demailly and Paun, \cite{DP}, for nefness.

A holomorphic line bundle $L$ on a compact K\"ahler manifold $N$ is nef if and only if
for every K\"ahler form $\omega_N$ on $N$, and every irreducible closed connected analytic subspace
${\mathcal S}\, \subset\, N$, the inequality
\begin{equation}\label{dp}
\int_{\mathcal S} c_1(L)\wedge \omega^{s-1}_N\, \geq\, 0
\end{equation}
holds, where $s\,=\, \dim_{\mathbb C}{\mathcal S}$ \cite[p.~1248, Corollary 0.4]{DP}.

Set $$(N,\, L)\,=\, ((f^*E_G)/P,\, \bigwedge^{\rm top} {\rm ad}(\widetilde{\mathcal F}_P)^*).$$ Take any
$(\omega_N,\, {\mathcal S})$ as above, so $${\mathcal S}\, \subset\,
(f^*E_G)/P\,.$$ Set $Z$ in the statement of the theorem to be ${\mathcal S}$, and
set $\gamma$ in the statement of the theorem to be the composition of maps
$$
{\mathcal S}\,\hookrightarrow\, (f^*E_G)/P\, \stackrel{\psi}{\longrightarrow}\, Y
\, \stackrel{f}{\longrightarrow}\, M\, ,
$$
where $\psi$ and $f$ are the maps in \eqref{psi} and \eqref{ef} respectively. Set the
reduction $E_P\,\subset\, \gamma^*E_G$ to be the restriction of the reduction
$$
\widetilde{\mathcal F}_P\, \subset\, (f\circ\psi)^*E_G
$$
(see \eqref{tf}) to ${\mathcal S}\,\subset\, (f^*E_G)/P$. So we have
$$
E_P\,=\,(\widetilde{\mathcal F}_P)\big\vert_{\mathcal S}\, .
$$
This implies that
$$
\bigwedge\nolimits^{\rm top} {\rm ad}(E_P)\,=\, \bigwedge\nolimits^{\rm top} {\rm ad}(\widetilde{\mathcal F}_P).
$$
Therefore, the given condition that
$$
{\rm degree}({\rm ad}(E_P))\, \leq\, 0
$$
implies that the inequality in \eqref{dp} holds. Now the above mentioned criterion of \cite{DP}
for nefness implies that $\bigwedge^{\rm top} {\rm ad}(\widetilde{\mathcal F}_P)^*$ is nef.
\end{proof}

%%%%%%%%%%%%%%%%%%%%%%%%%%%%%%%%%%%%%%%%%%%%%%%%%%%%%%%%%%%%%%%%%

\end{document}